\newtheorem{theorem}{Theorem}[section]
\newtheorem{lemma}[theorem]{Lemma}
\begin{document}

\title{\Large\bf Unconditional applicability of \\
Lehmer's measure to the two-term \\
Machin-like formula for $\pi$}

\bigskip
\author[1, 2, 3]{\small Sanjar M. Abrarov}
\author[2, 3, 4]{\small Rehan Siddiqui}
\author[3, 4]{\small Rajinder K. Jagpal}
\author[1, 2, 4]{\small \\ Brendan M. Quine}

\affil[1]{\scriptsize Thoth Technology Inc., Algonquin Radio Observatory, Achray Rd, RR6, Pembroke, Canada, K8A 6W7
\normalsize}
\affil[2]{\scriptsize Dept. Earth and Space Science and Engineering, York University, 4700 Keele St., Canada, M3J 1P3 \normalsize}
\affil[3]{\scriptsize Epic College of Technology, 5670 McAdam Rd., Mississauga, Canada, L4Z 1T2 \normalsize}
\affil[4]{\scriptsize Dept. Physics and Astronomy, York University, 4700 Keele St., Toronto, Canada, M3J 1P3 \normalsize}

\date{June 8, 2021}
\maketitle

\begin{abstract}
Lehmer defined a measure
$$
\mu=\sum\limits_{j=1}^J\frac{1}{\log_{10}\left(\left|\beta_j\right|\right)},
$$
where the $\beta_j$ may be either integers or rational numbers in a Machin-like formula for $\pi$. When the $\beta_j$ are integers, Lehmer's measure can be used to determine the computational efficiency of the given Machin-like formula for $\pi$. However, because the computations are complicated, it is unclear if Lehmer’s measure applies when one or more of the $\beta_j$ are rational. In this article, we develop a new algorithm for a two-term Machin-like formula for $\pi$ as an example of the unconditional applicability of Lehmer's measure. This approach does not involve any irrational numbers and may allow calculating $\pi$ rapidly by the Newton–Raphson iteration method for the tangent function.
\vspace{0.25cm}
\\
\noindent {\bf Keywords:} constant pi; Machin-like formula; Lehmer's measure; Newton--Raphson iteration \\
\vspace{0.25cm}
\end{abstract}

\section{Introduction}

In 1706, the English astronomer and mathematician John Machin discovered a two-term formula for $\pi$ \cite{Beckmann1971, Berggren2004, Borwein2008}
\begin{equation}\label{eq_1}
\frac{\pi }{4} = 4\arctan \left( {\frac{1}{5}} \right) - \arctan \left( {\frac{1}{{239}}} \right),
\end{equation}
that was later named in his honor. This formula for $\pi$ appeared to be more efficient than any others known by that time. In particular, due to the relatively rapid convergence of the right-hand side of \eqref{eq_1}, he was able to calculate $100$ decimal digits of $\pi$ \cite{Beckmann1971}. Nowadays, identities of the form
\begin{equation}\label{eq_2}
\frac{\pi }{4} = \sum\limits_{j = 1}^J {{\alpha _j}\arctan \left( {\frac{1}{{{\beta _j}}}} \right)},
\end{equation}
where ${\alpha _j}$ and ${\beta _j}$ are either integers or rational numbers, are called the Machin-like formulas for $\pi$. Consequently, a two-term Machin-like formula for $\pi$ is given by
\begin{equation}\label{eq_3}
\frac{\pi }{4} = {\alpha _1}\arctan \left( {\frac{1}{{{\beta _1}}}} \right) + {\alpha _2}\arctan \left( {\frac{1}{{{\beta _2}}}} \right).
\end{equation}
If in equation \eqref{eq_3} the constants ${\alpha _1}$ and ${\beta _1}$ are some positive integers and ${\alpha _2} = 1$, then the unknown value ${\beta _2}$ can be found as \cite{Abrarov2017a, Abrarov2017b}
\small
\begin{equation}\label{eq_4}
\frac{1}{{{\beta _2}}} = \frac{2}{{{{\left[ {\left( {{\beta _1} + i} \right)/\left( {{\beta _1} - i} \right)} \right]}^{{\alpha _1}}} + i}} + i \Leftrightarrow \beta_2 = \frac{2}{{{{\left[ {\left( {{\beta _1} + i} \right)/\left( {{\beta _1} - i} \right)} \right]}^{{\alpha_1}}} - i}} - i.
\end{equation}
\normalsize
Furthermore, since we assumed that ${\alpha_1}$ and ${\beta_1}$ are positive integers, from equation \eqref{eq_4} it immediately follows that ${\beta_2}$ must be either an integer or a rational number.

In 1938, Lehmer \cite{Lehmer1938} introduced a measure (see also \cite{Wetherfield1996})
\begin{equation}\label{eq_5}
\mu  = \sum\limits_{j = 1}^J {\frac{1}{{{{\log }_{10}}\left( {\left| {{\beta _j}} \right|} \right)}}},
\end{equation}
showing how much computational effort is required for a specific Machin-like formula for $\pi$. In particular, when $\mu$ is small, then less computational effort is required and, consequently, the computational efficiency of this formula is higher. Lehmer's measure is smaller if there are fewer summation terms $J$ and the constants $\beta_j$ are larger in magnitude. For more efficient computation, the constants $\beta_j$ should be larger by absolute value, since it is easier to approximate the arctangent function as its argument tends to zero (see \cite{Wetherfield1996} for more details).

It is also important to emphasize that in the same paper \cite{Lehmer1938} Lehmer presented a few Machin-like formulas where some of the ${\beta _j}$ are not integers but rational numbers. This signifies that Lehmer assumed that his measure \eqref{eq_5} remains valid whether the $\beta_j$ are integers or rational numbers.

In 2002 Kanada \cite{Calcut2009}, using the following self-checking pair of Machin-like formulas for $\pi$,
\small
\[
\frac{\pi }{4} = 44\arctan \left( {\frac{1}{{57}}} \right) + 7\arctan \left( {\frac{1}{{239}}} \right) - 12\arctan \left( {\frac{1}{{682}}} \right) + 24\arctan \left( {\frac{1}{{12943}}} \right)
\]
\normalsize
and
\small
\[
\frac{\pi }{4} = 12\arctan \left( {\frac{1}{{49}}} \right) + 32\arctan \left( {\frac{1}{{57}}} \right) - 5\arctan \left( {\frac{1}{{239}}} \right) + 12\arctan \left( {\frac{1}{{110443}}} \right)
\]
\normalsize
computed more than $1$ trillion digits of $\pi$. These two examples show that the Machin-like formulas have colossal potential in the computation of decimal digits of $\pi$.

In 1997, Chien-Lih showed a remarkable formula \cite{Chien-Lih1997}
\small
\[
\begin{aligned}
\frac{\pi }{4} =& 183\arctan \left( {\frac{1}{{239}}} \right) + 32\arctan \left( {\frac{1}{{1023}}} \right) - 68\arctan\left( {\frac{1}{{5832}}} \right) \\
&+ 12\arctan \left( {\frac{1}{{110443}}} \right) - 12\arctan \left( {\frac{1}{{4841182}}} \right) - 100\arctan \left( {\frac{1}{{6826318}}} \right)
\end{aligned}
\]
\normalsize
with $\mu  \approx 1.51244$. According to Weisstein \cite{Weisstein2020}, this Lehmer's measure is the smallest known value for the $\beta_j$ consisting of integers only. Later Chien-Lih \cite{Chien-Lih2004} showed how Lehmer's measure can be reduced even further by using an Euler type of identity in an iteration for generating the two-term Machin-like formulas like \eqref{eq_3} such that $\beta_1$ and $\beta_2$ are rational numbers.

In \cite{Abrarov2018a}, we derived the following simple identity (see also \cite{OEIS2017})
\begin{equation}\label{eq_6}
\frac{\pi }{4} = {2^{k - 1}}\arctan \left( {\frac{{\sqrt {2 - {c_{k - 1}}} }}{{{c_k}}}} \right), \qquad	k = \left\{ {2,3,4, \ldots } \right\},
\end{equation}
where $c_1=0$ and $c_k=\sqrt{2+c_{k-1}}$, and described how using this identity, another efficient method for generating the two-term Machin-like formula for$\;\pi$
\begin{equation}\label{eq_7}
\frac{\pi }{4} = {2^{k - 1}}\arctan \left( {\frac{1}{{{\beta _1}}}} \right) + \arctan \left( {\frac{1}{{{\beta _2}}}} \right),
\end{equation}
with small Lehmer's measure can be developed. In this approach, the constant ${\beta _1}$ can be chosen as a positive integer such that
\begin{equation}\label{eq_8}
{\beta _1} = \left\lfloor {\frac{{{c_k}}}{{\sqrt {2 - {c_{k - 1}}} }}} \right\rfloor
\end{equation}
and, in accordance with equation \eqref{eq_4}, the constant $\beta_2$ in equation \eqref{eq_7} can be found from
\begin{equation}\label{eq_9}
{\beta _2} = \frac{2}{{{{\left[ {\left( {{\beta _1} + i} \right)/\left( {{\beta _1} - i} \right)} \right]}^{{2^{k-1}}}} - i}} - i.
\end{equation}

It is not reasonable to solve equation \eqref{eq_9} directly to determine the rational number ${\beta _2}$, as its solution becomes tremendously difficult with increasing integer $k$. However, this problem can be effectively resolved by using a very simple two-step iteration procedure discussed in the next section. Therefore, our approach in generating the two-term Machin-like formula \eqref{eq_7} for $\pi$  with small Lehmer's measure is much easier than Chien-Lih's method \cite{Chien-Lih2004}.

Wetherfield \cite{Wetherfield1996} provides a detailed explanation clarifying the significance of Lehmer's measure that shows how much computation is required for a given Machin-like formula for $\pi$ when all the constants ${\beta _j} \in \mathbb{Z}$. However, it is unclear if this paradigm is also applicable when at least one number ${\beta _j}$ is rational. More specifically, the problem that occurs in computing the two-term Machin-like formula for  in Chien-Lih’s \cite{Chien-Lih2004} and our \cite{Abrarov2017a} iteration methods is related to the rapidly growing number of digits in the numerators and denominators of ${\beta _1}$ or ${\beta _2}$. This occurs simultaneously with an attempt to decrease Lehmer's measure. As a result, the subsequent exponentiation in conventional algorithms makes computing the decimal digits of $\pi$ inefficient. Therefore, the applicability of Lehmer's measure for a Machin-like formula for $\pi$ for the case $\beta_j \in \mathbb{Q}$ is questionable. For example, Lehmer's measure may be small, say less than 1. This means that less computational work is needed to calculate $\pi$. However, due to the large number of digits in the numerators and denominators in $\beta_1$ or $\beta_2$, a computer performs more intense arithmetic operations that make the runtime significantly longer. Consequently, we ask, Is Lehmer's measure still applicable when at least one constant from the set $\left\{\beta_j\right\}$ is not an integer but a rational number?

Motivated by an interesting paper in regard to equation \eqref{eq_7} that was recently published \cite{Wolfram2020}, we further develop our previous work \cite{Abrarov2017a, Abrarov2017b}. In this article, we propose a new algorithm showing how unconditional applicability of Lehmer's measure for the two-term Machin-like formula \eqref{eq_7} for  can be achieved. We also describe how linear and quadratic convergence to $\pi$ can be implemented.

\section{Preliminaries}

As mentioned, the number of summation terms $J$ in equation \eqref{eq_2} should be reduced in order to minimize Lehmer's measure. Since at $J=1$ there is only one Machin-like formula for $\pi$,
\begin{equation}\label{eq_10}
\frac{\pi }{4} = \arctan \left( 1 \right),
\end{equation}
we consider the case $J=2$. We attempted to find a method that can be used to generate a two-term Machin-like formula for $\pi$ with small Lehmer's measure. Equation \eqref{eq_7} provides an efficient way to do this.

In fact, the original Machin formula \eqref{eq_1} for $\pi$ appears quite naturally from the equations \eqref{eq_7}, \eqref{eq_8} and \eqref{eq_9} at $k=3$. Specifically, equation \eqref{eq_8} provides
\[
{\beta _1} = \left\lfloor {\frac{{{c_3}}}{{\sqrt {2 - {c_{3 - 1}}} }}} \right\rfloor  = \left\lfloor {\frac{{\sqrt {2 + \sqrt {2 + \sqrt 2 } } }}{{\sqrt {2 - \sqrt {2 + \sqrt 2 } } }}} \right\rfloor  = 5.
\]
Substituting into equation \eqref{eq_9} results in
\[
\frac{1}{{{\beta _2}}} = \frac{2}{{{{\left[ {\left( {5 + i} \right)/\left( {5 - i} \right)} \right]}^{{2^{3 - 1}}}} + i}} + i =  - \frac{1}{{239}}.
\]
Consequently, at $k = 3$ we get the constants ${2^{k - 1}} = 4$, ${\beta _1} = 5$ and ${\beta _2} =  - 239$ in equation \eqref{eq_7}. Since the arctangent function is odd ($\arctan\left(-x\right)=-\arctan\left(x\right)$), the constants for equation \eqref{eq_3} can be rearranged as $\alpha_1 = 4$, $\alpha_2 = -1$, ${\beta _1} = 5$ and ${\beta_2} = 239$. This corresponds to the original Machin formula \eqref{eq_1} for $\pi$.

\begin{theorem}

There are only four possible cases for a two-term Machin-like formula \eqref{eq_3} for $\pi$ when all four constants ${\alpha _1}$, ${\alpha _2}$, ${\beta _1}$ and ${\beta _2}$ are integers:
\newpage
\small
\begin{verbatim}
Text@Grid[{
   {"",Subscript[\[Alpha],1],Subscript[\[Alpha],2], 
    Subscript[\[Beta],1],Subscript[\[Beta],2]},
   {"Machin",4,-1,5,239},
   {"Euler",1,1,2,3},
   {"Hermann",2,-1,2,7},
   {"Hutton",2,1,3,7}
   },Frame->All,Alignment->Right]
\end{verbatim}
\normalsize

\begin{table}[H]
\begin{tabular}{| l | l | l | l | l |}
\hline
       &$\alpha_1$ & $\alpha_2$ & $\beta _1$ & $\beta_2$ \\ 
\hline
Machin  & 4         & -1         & 5          & 239 \\
\hline
Euler   & 1         &  1         & 2          & 3   \\
\hline
Hermann & 2         & -1         & 2          & 7   \\
\hline
Hutton  & 2         &  1         & 3          & 7   \\
\hline
\end{tabular}
\end{table}

\end{theorem}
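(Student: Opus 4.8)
The plan is to convert the real identity \eqref{eq_3} into a single multiplicative equation in the ring of Gaussian integers $\mathbb{Z}[i]$ and then exploit unique factorization. Using $\arctan(1/\beta_j)=\arg(\beta_j+i)$ and $\pi/4=\arg(1+i)$, equation \eqref{eq_3} is equivalent to
\[
u_1^{\alpha_1}u_2^{\alpha_2}=i, \qquad u_j := \frac{\beta_j+i}{\beta_j-i}=e^{2i\arctan(1/\beta_j)} .
\]
Since each $u_j$ has modulus $1$, this records the identity \emph{exactly}, up to the ambiguity $\pi/4 \bmod \pi$ introduced by doubling the phase; among the surviving candidates one checks at the end that the sum is genuinely $\pi/4$ and not $\pi/4+m\pi$. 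First I would record the routine normalizations: $\arctan(-x)=-\arctan(x)$ lets me take $\beta_1,\beta_2>0$ and push all signs into $\alpha_1,\alpha_2$, while $\beta_j=1$ collapses \eqref{eq_3} to the one-term case \eqref{eq_10}, so I may assume $\beta_1,\beta_2\ge 2$ and $\beta_1\ne\beta_2$.

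The core of the argument is unique factorization in $\mathbb{Z}[i]$. For an odd rational prime $p\equiv 1\pmod 4$ split as $p=\pi\bar\pi$, the factor $\pi$ divides exactly one of $\beta_j\pm i$ (their gcd is a power of $1+i$), so $u_j$ contributes $(\pi/\bar\pi)^{\pm e}$. The independence I need — that the elements $\pi/\bar\pi$ for distinct split primes, together with the units $i^{t}$, are multiplicatively independent — is \emph{not} a transcendence input but a direct consequence of unique factorization: any relation $\prod_k(\pi_k/\bar\pi_k)^{m_k}=i^{t}$ clears to an equality of Gaussian integers whose prime factorizations force every $m_k=0$. Applied to $u_1^{\alpha_1}u_2^{\alpha_2}=i$, this means every odd split prime cancels, so $\beta_1^2+1$ and $\beta_2^2+1$ must have the \emph{same} set of odd prime factors with exponents in the fixed ratio $-\alpha_2:\alpha_1$. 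Because $\beta^2+1$ carries at most one factor of $2$, the problem reduces to the Størmer-type system in which the odd part of each $\beta_j^2+1$ is supported on this common set of primes $p\equiv 1\pmod 4$; the decisive subcase, realized by all four target formulas, is a single shared prime, giving $\beta_1^2+1 = 2^{\varepsilon_1}p^{\,b_1}$ and $\beta_2^2+1 = 2^{\varepsilon_2}p^{\,b_2}$ with $\varepsilon_j\in\{0,1\}$.

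It then remains to solve these equations together with the residual bookkeeping on the prime $1+i$ and the unit $i$, which fixes the power of $(1+i)$ in $u_1^{\alpha_1}u_2^{\alpha_2}$ to have argument exactly $\pi/4$. Crucially, the coefficients are not free parameters: once the $\beta_j$ (hence the exponents $b_1,b_2$) are pinned down, the proportionality forces $\alpha_1:\alpha_2=b_2:b_1$ up to sign, so $|\alpha_2|=1$ and $\alpha_1\in\{1,2,4\}$ fall out automatically, consistent with $0<\arctan(1/\beta_j)\le\arctan(1/2)<\pi/4$ bounding the weighted sum. Solving $\beta^2+1=2^{\varepsilon}p^{\,b}$ leaves only $p=5$ (giving $\beta\in\{2,3,7\}$ from $5,10,50$) and $p=13$ (giving $\beta\in\{5,239\}$ from $26$ and $2\cdot 13^{4}$); matching these against the phase condition yields exactly Euler and Hermann from $(2,3)$ and $(2,7)$, Hutton from $(3,7)$, and Machin from $(5,239)$, and nothing else.

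The step I expect to be the genuine obstacle is this last Diophantine finiteness: guaranteeing that no large solution of $\beta^2+1=2^{\varepsilon}p^{\,b}$ — equivalently, no cancellation of a high prime power — slips through, and more generally that the multi-prime branches contribute nothing new. Everything upstream is bookkeeping in a unique factorization domain, but excluding large solutions is precisely where Størmer-type input (a Pell/Nagell–Ljunggren analysis of $\beta^2+1$) becomes unavoidable. As a fully self-contained fallback I would instead clear denominators in $\tan\!\big(\alpha_1\arctan(1/\beta_1)+\alpha_2\arctan(1/\beta_2)\big)=1$ via the multiple-angle expansion $\tan(\alpha\arctan x)=\mathrm{Im}(1+ix)^{\alpha}/\mathrm{Re}(1+ix)^{\alpha}$, obtain an integer polynomial relation, and close the argument by the same monotonicity-based size bounds over a provably finite range — trading the number-theoretic machinery for a more computational but elementary case check.
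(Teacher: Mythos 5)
Your reduction to Gaussian integers is sound as far as it goes: converting \eqref{eq_3} into $u_1^{\alpha_1}u_2^{\alpha_2}=i$, then using unique factorization in $\mathbb{Z}[i]$ to force the odd parts of $\beta_1^2+1$ and $\beta_2^2+1$ to be supported on the same split primes with exponents in the ratio fixed by $\alpha_1,\alpha_2$ --- this is precisely the skeleton of the classical proof (St{\o}rmer, 1899; see also Calcut \cite{Calcut2009}, already in the paper's bibliography). But there is a genuine gap exactly where you predicted one, and it is not removable by the means you propose. First, your assertion that ``solving $\beta^2+1=2^{\varepsilon}p^{b}$ leaves only $p=5$ and $p=13$'' is false as stated: $4^2+1=17$, $6^2+1=37$, $10^2+1=101$, and so on, all solve that single equation with other primes $p\equiv 1 \pmod 4$. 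What actually must be proved is the \emph{pair} statement: that no prime $p$ other than $5$ and $13$ admits two distinct integers $\beta_1,\beta_2$ with $\beta_j^2+1=2^{\varepsilon_j}p^{b_j}$ compatible with the exponent-ratio and phase conditions, and that the multi-prime branches are empty. That is a hard Diophantine finiteness theorem, not bookkeeping: it needs Lebesgue's theorem that $x^2+1=y^n$ has no solutions with $n>1$, and --- decisively, since Machin's own formula lives at $239^2+1=2\cdot 13^4$ --- St{\o}rmer/Ljunggren-type results such as the fact that $x^2+1=2y^4$ has only the solutions $(x,y)=(1,1)$ and $(239,13)$. None of this machinery is supplied in your proposal.

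Your fallback route does not close the gap either. Clearing denominators in $\tan\left(\alpha_1\arctan(1/\beta_1)+\alpha_2\arctan(1/\beta_2)\right)=1$ yields one polynomial relation in four integer unknowns, and the monotonicity bounds only force $|\alpha_1|\arctan(1/\beta_1)$ to lie near $\pi/4$; that still leaves infinitely many candidate pairs $(\alpha_1,\beta_1)$, roughly $\alpha_1\approx(\pi/4)\beta_1$, each of which determines a rational value for the second term whose integrality must be excluded. Excluding it for all of them is exactly the Diophantine problem you were trying to sidestep, so the ``provably finite range'' does not exist by elementary monotonicity alone. For comparison, the paper itself does not prove this theorem at all: it defers to \cite{Weisstein2020}, which in turn rests on St{\o}rmer's classification. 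So your proposal is a correct sketch of the standard route, but as written it proves strictly less than the statement; to complete it you must either import St{\o}rmer's Pell-equation theorem (at which point citing it, as the paper does, is the honest course) or carry out the Ljunggren-style analysis yourself.
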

\indent The proof for Theorem 2.1 can be found in \cite{Weisstein2020}.

\begin{lemma}
If in equation \eqref{eq_7} ${\beta _1} \in \mathbb{Z}$, then ${\beta _2} \notin \mathbb{Z}$ at any integer $k > 3$.
\end{lemma}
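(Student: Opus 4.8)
The plan is to derive the result from the classification in Theorem 2.1 rather than by attacking equation \eqref{eq_9} head-on. First I would record that, by construction through equations \eqref{eq_4} and \eqref{eq_9}, the constant $\beta_2$ is chosen precisely so that \eqref{eq_7} holds as an exact identity; hence \eqref{eq_7} is a genuine instance of the two-term Machin-like formula \eqref{eq_3} with the particular choice $\alpha_1 = 2^{k-1}$ and $\alpha_2 = 1$. The strategy is then a proof by contradiction: assume $\beta_2 \in \mathbb{Z}$ for some integer $k > 3$, and show that this forces \eqref{eq_7} into the finite list furnished by Theorem 2.1.

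Next I would normalize the signs. If $\beta_2$ (or $\beta_1$) is negative, I use the oddness of the arctangent, $\arctan(-x) = -\arctan(x)$, to replace the negative denominator by its absolute value at the cost of flipping the sign of the corresponding $\alpha$. After this rearrangement all four constants $\alpha_1, \alpha_2, \beta_1, \beta_2$ are integers with $\beta_1, \beta_2 > 0$, $\alpha_1 = 2^{k-1} > 0$ and $\alpha_2 = \pm 1$, which is exactly the setting governed by Theorem 2.1. The degenerate possibilities $\beta_1 = 1$ or $\beta_2 = 1$, where an arctangent equals $\pi/4$, are set aside since they do not produce a valid two-term formula of the required shape.

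Now I apply Theorem 2.1: the only admissible integer quadruples are the Machin, Euler, Hermann and Hutton rows, whose first entries are $\alpha_1 \in \{4, 1, 2, 2\}$, that is, $\alpha_1 \in \{2^0, 2^1, 2^2\}$. Equivalently, an integer $\beta_2$ can occur only when $2^{k-1} \in \{1, 2, 4\}$, i.e.\ only for $k \in \{1, 2, 3\}$. But the hypothesis $k > 3$ gives $\alpha_1 = 2^{k-1} \geq 2^3 = 8 > 4$, so \eqref{eq_7} cannot coincide with any of the four tabulated formulas, a contradiction. Therefore $\beta_2 \notin \mathbb{Z}$ for every integer $k > 3$.

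I expect the only delicate point to be the bookkeeping in the normalization step: one must ensure that the sign flips, together with the exclusion of the degenerate arguments $\beta_j = \pm 1$, genuinely place \eqref{eq_7} inside the exact hypotheses of Theorem 2.1, so that its completeness may be invoked. Once that is secured the arithmetic is immediate, since the powers of two $2^{k-1}$ with $k > 3$ simply overshoot the largest coefficient $\alpha_1 = 4$ appearing in the classification.
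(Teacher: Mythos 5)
Your proposal is correct and follows essentially the same route as the paper: both invoke the completeness of the classification in Theorem 2.1 and observe that for $k>3$ the coefficient $\alpha_1=2^{k-1}\geq 8$ exceeds the largest coefficient ($4$, from Machin's formula at $k=3$) appearing in any of the four admissible integer quadruples, so $\beta_2$ cannot be an integer. Your version is merely more explicit about the sign normalization via the oddness of $\arctan$ and the contradiction structure, details the paper's terse proof leaves implicit.
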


\begin{proof}
As we can see from the four cases given in Theorem 2.1, the largest possible value for ${\alpha _1}=2^{k-1}$ is $4$ and it occurs at $k = 3$ (see the example above). Therefore, for any integer ${\beta _1}$ at integer $k > 3$, the constant ${\beta _2}$ in the equation \eqref{eq_7} cannot be an integer.
\end{proof}

\begin{theorem}
$$
\mathop {\lim }\limits_{k \to \infty } {c_k} = \mathop {\lim }\limits_{k \to \infty } \underbrace {\sqrt {2 + \sqrt {2 + \sqrt {2 +  \cdots  + \sqrt 2 } } } }_{k\,\,{\text{square}}\,\,{\text{roots}}} = 2.
$$
\end{theorem}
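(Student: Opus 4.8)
The plan is to prove convergence by the Monotone Convergence Theorem and then pin down the value through the associated fixed-point equation. First I would show by induction that the sequence is bounded above by $2$: the base case $c_1 = 0 < 2$ holds, and if $c_{k-1} < 2$ then $c_k = \sqrt{2 + c_{k-1}} < \sqrt{2 + 2} = 2$, so $0 \le c_k < 2$ for every $k \ge 1$.

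Next I would establish that $(c_k)$ is strictly increasing. Because $c_{k+1} > c_k$ is equivalent to $2 + c_k > 2 + c_{k-1}$, i.e.\ to $c_k > c_{k-1}$, monotonicity propagates by a one-line induction from the base case $c_2 = \sqrt{2} > 0 = c_1$. A sequence that is both increasing and bounded above converges, so the limit $L := \lim_{k\to\infty} c_k$ exists and satisfies $0 \le L \le 2$.

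Finally I would pass to the limit in the defining recurrence. By continuity of the square root, letting $k \to \infty$ in $c_k = \sqrt{2 + c_{k-1}}$ gives $L = \sqrt{2 + L}$, hence $L^2 - L - 2 = (L-2)(L+1) = 0$. Since $L \ge 0$, the spurious root $L = -1$ is discarded and $L = 2$, which is the claim.

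The calculations here are entirely routine, so I expect no genuine obstacle; the only points deserving care are the justification that the limit may be moved inside the square root (continuity) and the rejection of the extraneous fixed point $L = -1$. As an elegant alternative worth recording, one can solve the recurrence in closed form: writing $c_k = 2\cos\theta_k$, the half-angle identity $\sqrt{2 + 2\cos\theta} = 2\cos(\theta/2)$ together with $c_1 = 2\cos(\pi/2) = 0$ yields $c_k = 2\cos\!\left(\pi/2^k\right)$, from which $\lim_{k\to\infty} c_k = 2\cos 0 = 2$ follows at once.
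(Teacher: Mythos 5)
Your proposal is correct, and it is in fact more complete than the paper's own argument. The paper's proof consists solely of your final step: it sets $X=\lim_{k\to\infty}c_k$, passes the limit inside the square root to get $X=\sqrt{2+X}$, squares to obtain $X^2=X+2$, and discards the root $-1$ by positivity. Crucially, the paper never establishes that the limit exists in the first place — the fixed-point manipulation is only valid once convergence is known, since a divergent or oscillating sequence can formally ``satisfy'' a fixed-point equation without converging to its root. Your use of the Monotone Convergence Theorem (boundedness $0\le c_k<2$ and strict monotonicity, each by a one-line induction) supplies exactly this missing justification, so your write-up repairs a genuine, if standard, gap in the published proof while ending with the same fixed-point computation. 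Your closed-form alternative $c_k=2\cos\left(\pi/2^k\right)$ is also worth keeping: beyond giving the limit instantly, it explains the paper's own identity \eqref{eq_6}, since with this parametrization $\sqrt{2-c_{k-1}}/c_k=\tan\left(\pi/2^k\right)$, so that $2^{k-1}\arctan\left(\sqrt{2-c_{k-1}}/c_k\right)=\pi/4$ becomes transparent rather than mysterious.
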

\begin{proof}
This is the simplest kind of Ramanujan nested radical and its proof is straightforward. Let $X=\mathop{\lim}\limits_{k\to\infty}{c_k}$. Then
$$
X= \mathop {\lim }\limits_{k \to \infty } \sqrt {2 + {c_{k - 1}}}  = \sqrt {2 + \mathop {\lim }\limits_{k \to \infty } {c_{k - 1}}}  = \sqrt {2 + \mathop {\lim }\limits_{k \to \infty } {c_k}}.
$$
Squaring both sides,
$$
{X^2}=2 + \mathop {\lim }\limits_{k \to \infty } {c_k} \Leftrightarrow {X^2}=2 + X.
$$
and solving results in two possible solutions, ${X_1} =  - 1$ and ${X_2} = 2$. Since for any positive index $k$ the value ${c_k}$ is always positive, we exclude the solution ${X_1} =  - 1$.
\end{proof}

\begin{lemma}
$$
\mathop {\lim }\limits_{k \to \infty } \sqrt {2 - {c_{k - 1}}}  = \mathop {\lim }\limits_{k \to \infty } \sqrt {2 - \underbrace {\sqrt {2 + \sqrt {2 +  \cdots  + \sqrt 2 } } }_{k - 1\,\,{\text{square\,\,roots}}}}  = 0.
$$
\end{lemma}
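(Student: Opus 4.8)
The plan is to reduce the claim immediately to the preceding theorem, which already establishes $\lim_{k\to\infty} c_k = 2$. Writing $f(x)=\sqrt{2-x}$, the statement is simply $\lim_{k\to\infty} f(c_{k-1}) = f(2) = 0$, so the whole lemma amounts to pushing the limit through the continuous function $f$. First I would note that shifting the index does not affect a limit, so that $\lim_{k\to\infty} c_{k-1} = \lim_{k\to\infty} c_k = 2$ directly from the preceding theorem.

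Next I would justify the interchange of the limit with the square root. Since $f$ is continuous at every point of its domain $(-\infty, 2]$, it suffices to verify that the argument $2 - c_{k-1}$ stays in that domain, i.e. that $c_{k-1} \le 2$ for all $k$. This follows from a one-line induction on the defining recursion $c_1 = 0$, $c_k = \sqrt{2 + c_{k-1}}$: the base case gives $c_1 = 0 < 2$, and if $c_{k-1} \le 2$ then $c_k = \sqrt{2 + c_{k-1}} \le \sqrt{4} = 2$. Hence $0 \le 2 - c_{k-1} \le 2$ for every $k$, the square root is well defined and continuous on this interval, and continuity lets me write
$$
\lim_{k\to\infty}\sqrt{2-c_{k-1}} = \sqrt{2 - \lim_{k\to\infty} c_{k-1}} = \sqrt{2-2} = 0.
$$

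I do not expect any genuine obstacle here; the only point requiring care is the passage of the limit through the square root, which is legitimate precisely because $f$ is continuous and the sequence $2 - c_{k-1}$ remains nonnegative, as the induction above guarantees. Everything else is an immediate consequence of the limit value $2$ supplied by the preceding theorem, so the proof is essentially a short corollary rather than an independent argument.
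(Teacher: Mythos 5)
Your proof is correct and follows essentially the same route as the paper: both reduce the claim to Theorem 2.3 and pass the limit through the square root, writing $\lim_{k\to\infty}\sqrt{2-c_{k-1}}=\sqrt{2-\lim_{k\to\infty}c_k}=0$. The only difference is that you explicitly justify this interchange by continuity of $x\mapsto\sqrt{2-x}$ together with an induction showing $c_k\le 2$, a point the paper's one-line proof takes for granted.
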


\begin{proof}
The proof follows immediately from Theorem 2.3 since
$$
\mathop {\lim }\limits_{k \to \infty } \sqrt {2 - {c_{k - 1}}}  = \sqrt {2 - \mathop {\lim }\limits_{k \to \infty } {c_{k-1}}}  = \sqrt {2 - \mathop {\lim }\limits_{k \to \infty } {c_k}}  = \sqrt {2 - 2}  = 0.
$$
\end{proof}

\begin{lemma}
$$
\mathop {\lim }\limits_{k \to \infty } \frac{{\left( {\frac{{{c_k}}}{{\sqrt {2 - {c_{k - 1}}} }}} \right)}}{{{\beta _1}}} = 1.
$$
\end{lemma}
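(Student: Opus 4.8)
The plan is to treat the quantity $x_k := c_k/\sqrt{2 - c_{k-1}}$ as a real number whose integer part is exactly $\beta_1$, by equation \eqref{eq_8}. The statement then reduces to showing that the ratio of a positive real number to its own floor tends to $1$, which will hold precisely because $x_k$ diverges to $+\infty$.

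First I would establish that $x_k \to +\infty$ as $k \to \infty$. The numerator $c_k \to 2$ by Theorem 2.3, while the denominator $\sqrt{2 - c_{k-1}} \to 0$ through strictly positive values by Lemma 2.4 (note that $c_{k-1} < 2$ for every finite $k$, so the denominator is well defined and positive). Hence $x_k = c_k/\sqrt{2-c_{k-1}} \to +\infty$.

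Next I would invoke the elementary floor bounds. For any real $x > 0$ one has $\lfloor x \rfloor \le x < \lfloor x \rfloor + 1$, and dividing through by the positive integer $\lfloor x \rfloor$ gives
$$
1 \le \frac{x}{\lfloor x \rfloor} < 1 + \frac{1}{\lfloor x \rfloor}.
$$
Applying this with $x = x_k$ and recalling $\beta_1 = \lfloor x_k \rfloor$ yields $1 \le x_k/\beta_1 < 1 + 1/\beta_1$.

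Finally, since $x_k \to +\infty$ forces $\beta_1 = \lfloor x_k \rfloor \to +\infty$ (because $\beta_1 > x_k - 1$), the upper bound $1 + 1/\beta_1 \to 1$, and the squeeze theorem gives $x_k/\beta_1 \to 1$, which is the claim. There is essentially no serious obstacle here: the only points to check carefully are that $\beta_1$ is a genuine positive integer (so that the division is legitimate and the floor inequalities apply), which holds for all $k \ge 2$ since $x_k \ge 1$ there, and that $\beta_1$ truly diverges rather than stalling at a finite value — both of which follow immediately from the divergence of $x_k$ established in the first step.
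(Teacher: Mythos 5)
Your proof is correct and takes essentially the same route as the paper: both arguments reduce to the observation that $c_k/\sqrt{2-c_{k-1}}$ differs from its floor $\beta_1$ by less than $1$, so the ratio tends to $1$ once the floor diverges. If anything, your version is the more careful one --- the paper decomposes the quantity into $\lfloor\,\cdot\,\rfloor + \mathrm{frac}(\,\cdot\,)$ and drops the fractional contribution without explicitly noting that this step requires $\beta_1 \to \infty$, whereas you establish that divergence up front from Theorem 2.3 and Lemma 2.4 and then close the argument with the squeeze bound $1 \le x_k/\lfloor x_k\rfloor < 1 + 1/\lfloor x_k\rfloor$ (the paper only records the divergence of $\beta_1$ later, in equation \eqref{eq_14} inside the proof of Lemma 2.6).
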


\begin{proof}
Using equation \eqref{eq_8} that defines ${\beta _1}$ by the floor function, the limit can be rewritten as
\begin{equation}\label{eq_11}
\mathop {\lim }\limits_{k \to \infty } \frac{{\left( {\frac{{{c_k}}}{{\sqrt {2 - {c_{k - 1}}} }}} \right)}}{{\left\lfloor {\frac{{{c_k}}}{{\sqrt {2 - {c_{k - 1}}} }}} \right\rfloor }} = 1.
\end{equation}
By definition, the fractional part given by the difference
$$
{\rm{frac}}\left( {\frac{{{c_k}}}{{\sqrt {2 - {c_{k - 1}}} }}} \right) = \left(\frac{{{c_k}}}{{\sqrt {2 - {c_{k - 1}}} }}\right) - \left\lfloor {\frac{{{c_k}}}{{\sqrt {2 - {c_{k - 1}}} }}} \right\rfloor,
$$
is positive and cannot be greater than 1. Therefore, the limit \eqref{eq_11} can be rewritten in the form
$$
 \mathop {\lim }\limits_{k \to \infty } \frac{{\left( {\frac{{{c_k}}}{{\sqrt {2 - {c_{k - 1}}} }}} \right) }}{{\left\lfloor {\frac{{{c_k}}}{{\sqrt {2 - {c_{k - 1}}} }}} \right\rfloor }}=\mathop {\lim }\limits_{k \to \infty } \frac{{\left\lfloor {\frac{{{c_k}}}{{\sqrt {2 - {c_{k - 1}}} }}} \right\rfloor  + {\text{frac}}\left( {\frac{{{c_k}}}{{\sqrt {2 - {c_{k - 1}}} }}} \right)}}{{\left\lfloor {\frac{{{c_k}}}{{\sqrt {2 - {c_{k - 1}}} }}} \right\rfloor }} = \mathop {\lim }\limits_{k \to \infty } \frac{{\left\lfloor {\frac{{{c_k}}}{{\sqrt {2 - {c_{k - 1}}} }}} \right\rfloor }}{{\left\lfloor {\frac{{{c_k}}}{{\sqrt {2 - {c_{k - 1}}} }}} \right\rfloor }} + 0 = 1.
$$
\end{proof}

\begin{lemma}
We have that
\begin{equation}\label{eq_12}
\lim_{k\to\infty}\left|\beta_2\right|=\infty.
\end{equation}
\end{lemma}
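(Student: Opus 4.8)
The plan is to reduce the claim $\lim_{k\to\infty}|\beta_2|=\infty$ to showing $\arctan\!\left(1/\beta_2\right)\to 0$; these are equivalent because $\beta_2$ is real (indeed rational by \eqref{eq_4}) and $\arctan$ is a continuous bijection of $\mathbb{R}$ onto $(-\pi/2,\pi/2)$ with $\arctan(u)\to 0$ iff $u\to 0$ iff $|\beta_2|\to\infty$. To control $\arctan\!\left(1/\beta_2\right)$ I would set $x_k=c_k/\sqrt{2-c_{k-1}}$, so that $\beta_1=\lfloor x_k\rfloor$ by \eqref{eq_8} and the arctangent argument of \eqref{eq_6} is exactly $1/x_k$. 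Since $\beta_2$ is constructed through \eqref{eq_9} precisely so that \eqref{eq_7} holds, subtracting \eqref{eq_7} from the identity \eqref{eq_6} (both equal $\pi/4$) yields the exact relation
\[
\arctan\!\left(\frac{1}{\beta_2}\right)=2^{k-1}\left[\arctan\!\left(\frac{1}{x_k}\right)-\arctan\!\left(\frac{1}{\beta_1}\right)\right].
\]

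Next I would estimate the bracketed difference. Because $0<\beta_1=\lfloor x_k\rfloor\le x_k$ for all large $k$, the arctangent subtraction formula gives
\[
\arctan\!\left(\frac{1}{x_k}\right)-\arctan\!\left(\frac{1}{\beta_1}\right)=\arctan\!\left(\frac{\beta_1-x_k}{x_k\beta_1+1}\right),
\]
and using $|\arctan u|\le|u|$ together with $|x_k-\beta_1|=\mathrm{frac}(x_k)<1$ and $x_k\beta_1\ge\beta_1^2$ I obtain
\[
\left|\arctan\!\left(\frac{1}{\beta_2}\right)\right|<\frac{2^{k-1}}{\beta_1^2}.
\]
It therefore suffices to show $2^{k-1}/\beta_1^2\to 0$, i.e. that $\beta_1$ grows fast enough to beat the factor $2^{k-1}$.

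This growth estimate is the heart of the argument and the step I expect to be the main obstacle, since Lemma 2.5 supplies only the ratio $x_k/\beta_1\to 1$ and not the absolute rate. I would obtain it from the telescoping identity $2-c_k=(2-c_{k-1})/(2+c_k)$, which follows by rationalizing $2-\sqrt{2+c_{k-1}}$. By Theorem 2.3, $c_k\to 2$, so $2+c_k\ge 3$ for all $k$ beyond some $K_0$; hence $2-c_k\le\tfrac13(2-c_{k-1})$ eventually, giving $2-c_{k-1}\le C\,(1/3)^k$ for a constant $C$. Consequently $\sqrt{2-c_{k-1}}\le\sqrt{C}\,3^{-k/2}$, and since $c_k\ge 1$ for large $k$ we get $x_k=c_k/\sqrt{2-c_{k-1}}\ge C^{-1/2}\,3^{k/2}$, whence $\beta_1\ge x_k-1$ satisfies $\beta_1^2\gtrsim 3^k$.

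Finally I would combine the pieces: the bound becomes $\left|\arctan\!\left(1/\beta_2\right)\right|<2^{k-1}/\beta_1^2\lesssim 2^{k-1}/3^k=\tfrac12(2/3)^k\to 0$. Hence $\arctan\!\left(1/\beta_2\right)\to 0$, so $1/\beta_2\to 0$ and $|\beta_2|\to\infty$, which is \eqref{eq_12}. Alternatively one may invoke the closed form $c_k=2\cos(\pi/2^k)$, whence $x_k=\cot(\pi/2^k)\sim 2^k/\pi$ and the same conclusion follows at once; the recursive estimate above is preferred only to keep the argument self-contained within the lemmas already proved.
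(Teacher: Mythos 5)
Your proof is correct, but it takes a genuinely different route from the paper's. The paper argues softly: starting from the ratio limit \eqref{eq_11} of Lemma 2.5, it passes to reciprocals, then to the ratio of the corresponding arctangents (using $\arctan u \sim u$ as $u \to 0$), multiplies numerator and denominator by $2^{k-1}$, and invokes the exactness of \eqref{eq_6} to conclude $2^{k-1}\arctan\left(1/\beta_1\right) \to \pi/4$; comparing this with the exact identity \eqref{eq_7} then forces $\arctan\left(1/\beta_2\right) \to 0$. You instead subtract \eqref{eq_7} from \eqref{eq_6} to get an exact expression for $\arctan\left(1/\beta_2\right)$, bound it by $2^{k-1}/\beta_1^2$ via the arctangent subtraction formula, and then supply the quantitative growth estimate $\beta_1 \gtrsim 3^{k/2}$ from the recursion $2 - c_k = (2-c_{k-1})/(2+c_k)$ --- an ingredient nowhere in the paper, whose lemmas give only that $\sqrt{2-c_{k-1}} \to 0$ with no rate; you correctly identify this as the step where Lemma 2.5 alone is insufficient. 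Your approach costs this extra estimate but buys an explicit exponential rate $\left|\arctan\left(1/\beta_2\right)\right| \lesssim (2/3)^k$ (sharpened to $\sim \pi^2 2^{-k-1}$, i.e. $|\beta_2| \gtrsim 2^k$, by the closed form $c_k = 2\cos\left(\pi/2^k\right)$), which quantifies how fast $|\beta_2|$ grows and hence how fast Lehmer's measure decays --- information the paper's soft argument cannot provide; it also bypasses Lemma 2.5 entirely, needing only Theorem 2.3 and the two exact identities. Both arguments rest on the same implicit assumption, namely that \eqref{eq_7} with $\beta_2$ defined by \eqref{eq_9} holds exactly for every $k$ (no stray multiple of $\pi$). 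Incidentally, the paper's own proof ends with a typo, asserting $\lim_{k\to\infty}|\beta_1| = \infty$ where $\beta_2$ is meant; your conclusion states the claim correctly.
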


\begin{proof}
Equation \eqref{eq_9} is too hard to work with directly, so we start with the limit \eqref{eq_11} from Lemma 2.5. Since the limit is 1, the limit of the ratio of the reciprocals of its numerator and denominator must also be 1,
\begin{equation}\label{eq_13}
\mathop {\lim }\limits_{k \to \infty } \frac{{{{\left( {\frac{{{c_k}}}{{\sqrt {2 - {c_{k - 1}}} }}} \right)}^{ - 1}}}}{{\frac{1}{{{\beta _1}}}}} = \mathop {\lim }\limits_{k \to \infty } \frac{{\left( {\frac{{\sqrt {2 - {c_{k - 1}}} }}{{{c_k}}}} \right)}}{{\frac{1}{{{\beta _1}}}}} = 1.
\end{equation}
From Theorem 2.3 and Lemma 2.4,
$$
\mathop {\lim }\limits_{k \to \infty } {\frac{{\sqrt {2 - {c_{k - 1}}} }}{{{c_k}}}} = \frac{{\mathop {\lim }\limits_{k \to \infty } \sqrt {2 - {c_{k - 1}}} }}{{\mathop {\lim }\limits_{k \to \infty } {c_k}}} = \frac{0}{2} = 0
$$
and
\begin{equation}\label{eq_14}
\mathop {\lim }\limits_{k \to \infty } \frac{1}{{{\beta _1}}} = \mathop {\lim }\limits_{k \to \infty } \frac{1}{{\left\lfloor {\frac{{{c_k}}}{{\sqrt {2 - {c_{k - 1}}} }}} \right\rfloor }} = \frac{1}{{\left\lfloor {\frac{{\mathop {\lim }\limits_{k \to \infty } {c_k}}}{{\mathop {\lim }\limits_{k \to \infty } \sqrt {2 - {c_{k - 1}}} }}} \right\rfloor }} = \frac{1}{\infty} = 0.
\end{equation}
Since both the numerator and denominator in \eqref{eq_13} tend to zero as $k$ tends to infinity and since $\arctan\left(x\right)\to 0$ as $x\to 0$, we can rewrite \eqref{eq_13} as
$$
\mathop {\lim }\limits_{k \to \infty } \frac{{\arctan \left( {\frac{{\sqrt {2 - {c_{k - 1}}} }}{{{c_k}}}} \right)}}{{\arctan \left( {\frac{1}{{{\beta _1}}}} \right)}} = 1
$$
or
\begin{equation}\label{eq_15}
\mathop {\lim }\limits_{k \to \infty } \frac{{{2^{k - 1}}\arctan \left( {\frac{{\sqrt {2 - {c_{k - 1}}} }}{{{c_k}}}} \right)}}{{{2^{k - 1}}\arctan \left( {\frac{1}{{{\beta _1}}}} \right)}} = 1.
\end{equation}
Since equation \eqref{eq_6} is valid for an arbitrarily large integer $k$, \eqref{eq_15} implies that
\begin{equation}\label{eq_16}
\mathop {\lim }\limits_{k \to \infty } {2^{k - 1}}\arctan \left( {\frac{{\sqrt {2 - {c_{k - 1}}} }}{{{c_k}}}} \right) = \mathop {\lim }\limits_{k \to \infty } {2^{k - 1}}\arctan \left( {\frac{1}{{{\beta _1}}}} \right) = \frac{\pi }{4}.
\end{equation}
However, we also have
\begin{equation}\label{eq_17}
\mathop {\lim }\limits_{k \to \infty } \left[ {{2^{k - 1}}\arctan \left( {\frac{1}{{{\beta _1}}}} \right) + \arctan \left( {\frac{1}{{{\beta _2}}}} \right)} \right] = \frac{\pi }{4}
\end{equation}
since equation \eqref{eq_7} is also valid at an arbitrarily large integer $k$. Comparing the limits \eqref{eq_16} and \eqref{eq_17}, we get
\small
\begin{equation}\label{eq_18}
\mathop {\lim }\limits_{k \to \infty } {2^{k - 1}}\arctan \left( {\frac{1}{{{\beta _1}}}} \right) = \mathop {\lim }\limits_{k \to \infty } \left[ {{2^{k - 1}}\arctan \left( {\frac{1}{{{\beta _1}}}} \right) + \arctan \left( {\frac{1}{{{\beta _2}}}} \right)} \right] = \frac{\pi }{4}.
\end{equation}
\normalsize
Equation \eqref{eq_18} is valid if and only if
$$
\mathop {\lim }\limits_{k \to \infty } \arctan \left( {\frac{1}{{{\beta _2}}}} \right) = 0.
$$
so $\lim_{k\to\infty}\left|\beta_1\right|=\infty$, which is (12).

\end{proof}

\begin{lemma}
The Lehmer's measure \eqref{eq_5} may be vanishingly small.
\end{lemma}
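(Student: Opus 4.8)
The plan is to obtain the result as an immediate consequence of the two divergence statements already established, namely that $\left|\beta_1\right|$ and $\left|\beta_2\right|$ both grow without bound as $k\to\infty$. Since the formula \eqref{eq_7} has exactly $J=2$ summation terms, Lehmer's measure \eqref{eq_5} reduces to
\[
\mu = \frac{1}{\log_{10}\left(\left|\beta_1\right|\right)} + \frac{1}{\log_{10}\left(\left|\beta_2\right|\right)},
\]
and the strategy is to show that each of these two summands tends to zero as $k\to\infty$, so that their sum does as well.

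First I would recall equation \eqref{eq_14} from the proof of Lemma 2.6, which gives $\lim_{k\to\infty} 1/\beta_1 = 0$; since $\beta_1$ is a positive integer defined by the floor function in \eqref{eq_8}, this is equivalent to $\lim_{k\to\infty}\left|\beta_1\right| = \infty$. Next I would invoke Lemma 2.6 itself, which asserts $\lim_{k\to\infty}\left|\beta_2\right| = \infty$ in equation \eqref{eq_12}. Because the base-$10$ logarithm is continuous and increases without bound, these two limits yield $\log_{10}\left(\left|\beta_1\right|\right)\to\infty$ and $\log_{10}\left(\left|\beta_2\right|\right)\to\infty$, and hence
\[
\lim_{k\to\infty}\mu = \lim_{k\to\infty}\left[\frac{1}{\log_{10}\left(\left|\beta_1\right|\right)} + \frac{1}{\log_{10}\left(\left|\beta_2\right|\right)}\right] = 0.
\]
Consequently, by choosing $k$ large enough, $\mu$ can be driven below any prescribed positive bound, which is exactly the assertion that Lehmer's measure may be vanishingly small.

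The one point I would treat with care -- and the only place a subtlety could hide -- is the requirement that both logarithms be strictly positive, so that the reciprocals are well defined and no division by zero or sign change is inadvertently introduced. This is secured for all sufficiently large $k$, since $\left|\beta_1\right| > 1$ and $\left|\beta_2\right| > 1$ once $k$ passes a small threshold, both quantities diverging to $+\infty$. Apart from this bookkeeping, I expect no genuine obstacle: the substantive work was already carried out in the two divergence lemmas, and the present statement follows essentially as a corollary of them.
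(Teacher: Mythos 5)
Your proof is correct and follows essentially the same route as the paper's: invoke \eqref{eq_14} to get $\beta_1\to\infty$, invoke Lemma 2.6 (equation \eqref{eq_12}) for $\left|\beta_2\right|\to\infty$, and conclude from \eqref{eq_5} that $\mu\to 0$ as $k\to\infty$. Your added care about the logarithms being strictly positive for large $k$ is a minor refinement the paper omits, but the substance is identical.
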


\begin{proof}
The limit \eqref{eq_14} implies that
\begin{equation}\label{eq_19}
\mathop {\lim }\limits_{k \to \infty } {\beta _1} = \infty.
\end{equation}
From \eqref{eq_12} and \eqref{eq_19} we conclude that as $k\to\infty$, both $\beta_1$ and $\beta_2$ tend to infinity. Thus, according to equation \eqref{eq_5}, Lehmer's measure $\mu$ for the two-term Machin-like formula \eqref{eq_7} for $\pi$ tends to zero with increasing $k$.
\end{proof}

\begin{theorem}
If equation \eqref{eq_8} holds, then the constant  is always negative.
\end{theorem}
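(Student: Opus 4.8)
The plan is to read off the sign of the constant $\beta_2$ directly from the exact two-term identity \eqref{eq_7}, by comparing the integer $\beta_1$ furnished by the floor in \eqref{eq_8} against the exact real number $c_k/\sqrt{2-c_{k-1}}$ that occurs in the exact single-term identity \eqref{eq_6}. The key observation is that rounding \emph{down} to an integer can only make $\beta_1$ smaller than the exact value, hence can only make $\arctan\left(1/\beta_1\right)$ larger than the exact argument appearing in \eqref{eq_6}. The first term of \eqref{eq_7} therefore overshoots $\pi/4$, and the remaining term $\arctan\left(1/\beta_2\right)$ is forced to be non-positive, which pins down the sign of $\beta_2$.

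First I would collect the positivity facts that make everything well defined. From Theorem 2.3 and the recursion $c_k=\sqrt{2+c_{k-1}}$ one has $0\le c_{k-1}<2$ and $0<c_k<2$ for every finite $k$, so $\sqrt{2-c_{k-1}}>0$ and $c_k/\sqrt{2-c_{k-1}}$ is a well-defined positive real. The floor in \eqref{eq_8} then gives $0<\beta_1\le c_k/\sqrt{2-c_{k-1}}$, and taking reciprocals of these positive quantities reverses the inequality to $1/\beta_1\ge\sqrt{2-c_{k-1}}/c_k>0$. Since $\arctan$ is strictly increasing and $2^{k-1}>0$, applying the arctangent and invoking the exact identity \eqref{eq_6} for the right-hand side produces
\[
2^{k-1}\arctan\left(\frac{1}{\beta_1}\right)\ \ge\ 2^{k-1}\arctan\left(\frac{\sqrt{2-c_{k-1}}}{c_k}\right)\ =\ \frac{\pi}{4}.
\]
Rearranging the exact identity \eqref{eq_7}, which holds for the same $k$ by the very construction of $\beta_2$, and inserting this bound gives
\[
\arctan\left(\frac{1}{\beta_2}\right)\ =\ \frac{\pi}{4}-2^{k-1}\arctan\left(\frac{1}{\beta_1}\right)\ \le\ 0,
\]
so that $1/\beta_2\le 0$.

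It remains to upgrade ``$\le 0$'' to strict negativity, and this is the step I expect to be the main obstacle. By the construction \eqref{eq_4}, \eqref{eq_9} the value $\beta_2$ is a finite nonzero rational number, so $1/\beta_2$ is a finite nonzero real; combined with $1/\beta_2\le 0$ this forces $1/\beta_2<0$ and hence $\beta_2<0$. The only way this argument can fail is the degenerate equality $1/\beta_2=0$ (that is, $\beta_2=\infty$), which occurs precisely when $c_k/\sqrt{2-c_{k-1}}$ is itself an integer, so that the floor changes nothing and the first term of \eqref{eq_7} hits $\pi/4$ exactly. To exclude this I would show that the ratio is never an integer: a short trigonometric computation writes $c_k/\sqrt{2-c_{k-1}}$ as a cotangent of a dyadic fraction of $\pi$, which is irrational by Niven's theorem, so the floor in \eqref{eq_8} is always a strict truncation and every inequality above is strict. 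Alternatively one may invoke Lemma 2.2, which already yields $\beta_2\notin\mathbb{Z}$, and in particular $\beta_2$ finite, for every integer $k>3$, while the single remaining case $k=3$ is the Machin case with $\beta_2=-239<0$ checked directly in the example preceding Theorem 2.1.
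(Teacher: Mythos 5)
Your proof is correct and takes essentially the same route as the paper: both compare the floor $\beta_1$ against the exact value $c_k/\sqrt{2-c_{k-1}}$, use monotonicity of the arctangent together with identities \eqref{eq_6} and \eqref{eq_7}, and reduce everything to the key fact that $c_k/\sqrt{2-c_{k-1}}$ is never an integer --- which the paper justifies by the uniqueness of the single-term formula \eqref{eq_10}, while you invoke Niven's theorem for $\cot\bigl(\pi/2^{k+1}\bigr)$. One caution: your fallback argument via Lemma 2.2 would not suffice on its own, since it overlooks the case $k=2$ (where $\beta_2=-7$) and non-integrality of $\beta_2$ does not by itself exclude the degenerate case $1/\beta_2=0$; but your primary argument does not rely on it.
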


\begin{proof}
There is only one single-term Machin-like formula \eqref{eq_10} for $\pi$ such that the constants $\alpha_1$ and $\beta_1$ are both integers. Therefore, in equation \eqref{eq_6}, at any integer $k\ge 2$ the argument of the arctangent function $\sqrt {2 - {c_{k - 1}}} /{c_k}$ cannot be represented as the reciprocal of an integer; that is, ${c_k}/\sqrt {2 - {c_{k - 1}}}$ is not an integer. Therefore,
$$
{\rm{frac}}\left( {\frac{{{c_k}}}{{\sqrt {2 - {c_{k - 1}}} }}} \right) > 0
$$
and
$$\frac{{{c_k}}}{{\sqrt {2 - {c_{k - 1}}} }} > \left\lfloor {\frac{{{c_k}}}{{\sqrt {2 - {c_{k - 1}}} }}} \right\rfloor , \qquad k = \left\{ {2,3,4, \ldots } \right\},
$$
This implies
$$
\arctan \left( {\frac{{\sqrt {2 - {c_{k - 1}}} }}{{{c_k}}}} \right) < \arctan \left( {1/\left\lfloor {\frac{{{c_k}}}{{\sqrt {2 - {c_{k - 1}}} }}} \right\rfloor } \right)
$$
or
$$
\arctan \left( {\frac{{\sqrt {2 - {c_{k - 1}}} }}{{{c_k}}}} \right) < \arctan \left( {\frac{1}{{{\beta _1}}}} \right)
$$
or
\begin{equation}\label{eq_20}
{2^{k - 1}}\arctan \left( {\frac{{\sqrt {2 - {c_{k - 1}}} }}{{{c_k}}}} \right) < {2^{k - 1}}\arctan \left( {\frac{1}{{{\beta _1}}}} \right).
\end{equation}
We make \eqref{eq_20} into an equality by adding a negative error term  $\varepsilon$ such that
$$
{2^{k - 1}}\arctan \left( {\frac{{\sqrt {2 - {c_{k - 1}}} }}{{{c_k}}}} \right) = {2^{k - 1}}\arctan \left( {\frac{1}{{{\beta _1}}}} \right) + \varepsilon.
$$
Defining the constant $\beta_2$ in accordance with equation \eqref{eq_9}, we find the error term to be
\begin{equation}\label{eq_21}
\varepsilon  = \arctan \left( {\frac{1}{{{\beta _2}}}} \right).
\end{equation}
Since $\varepsilon$ is negative, the constant $\beta_2$ is also negative.

\end{proof}

\section{Iteration methods}

\subsection{Arctangent function}

Since in equation \eqref{eq_7} the constant ${\beta _1}$ is an integer, the first arctangent function term can be computed by any existing method. For example, we can use Euler's formula for the arctangent function
\begin{equation}\label{eq_22}
\arctan \left( x \right) = \sum\limits_{m = 0}^\infty  {\frac{{{2^{2m}}{{\left( {m!} \right)}^2}}}{{\left( {2m + 1} \right)!}}\frac{{{x^{2m + 1}}}}{{{{\left( {1 + {x^2}} \right)}^{m + 1}}}}}.
\end{equation}
Chien-Lih used this formula to develop his iteration method for generating the two-term Machin-like formula for $\pi$ \cite{Chien-Lih2004} and later he found an elegant derivation of this formula \cite{Chien-Lih2005}.
We derived another series expansion of the arctangent function \cite{Abrarov2017b, Abrarov2018a}:
\begin{equation}\label{eq_23}
\arctan \left( x \right) = i\sum\limits_{m = 1}^\infty  {\frac{1}{{2m - 1}}} \left( {\frac{1}{{{{\left( {1 + 2i/x} \right)}^{2m - 1}}}} - \frac{1}{{{{\left( {1 - 2i/x} \right)}^{2m - 1}}}}} \right).
\end{equation}

It interesting that generalizing the derivation method that was used to get equation \eqref{eq_23}, we can find by induction the identity
\[
\arctan\left(x\right) = \sum_{m = 1}^M \arctan\left(\frac{M x}{M^2 + \left(m - 1\right) m x^2}\right),
\]
where $M$ is the order of arctan function expansion; that yields simple approximations like
\[
\arctan\left(x\right) \approx \sum_{m = 1}^M \frac{M x}{M^2 + \left(m - 1\right) m x^2}
\]
and
\[
\arctan\left(x\right) \approx \sum_{m = 1}^M \left(\frac{M x}{M^2 + \left(m - 1\right) m x^2} - \frac{1}{3}\left(\frac{M x}{M^2 + \left(m - 1\right) m x^2}\right)^3\right)
\]
since $\arctan\left(x\right) = x-x^3/3+O\left(x^5\right)$.

The representation \eqref{eq_23} of the arctangent function is not optimal for algorithmic implementation, since it deals with complex numbers. Fortunately, as we showed in \cite{Abrarov2017a}, this series expansion can be significantly simplified to
\begin{equation}\label{eq_24}
\arctan \left( x \right) = 2\sum\limits_{m = 1}^\infty  {\frac{1}{{2m - 1}}\frac{{{g_m}\left( x \right)}}{{g_m^2\left( x \right) + h_m^2\left( x \right)}}},
\end{equation}
where the expansion coefficients are computed by iteration:
$$
{g_1}\left( x \right) = 2/x,\,\,\,{h_1}\left( x \right) = 1,
$$
$$
{g_m}\left( x \right) = {g_{m - 1}}\left( x \right)\left( {1 - 4/{x^2}} \right) + 4{h_{m - 1}}\left( x \right)/x,
$$
$$
{h_m}\left( x \right) = {h_{m - 1}}\left( x \right)\left( {1 - 4/{x^2}} \right) - 4{g_{m - 1}}\left( x \right)/x.
$$

Both series expansions \eqref{eq_22} and \eqref{eq_24} converge rapidly and need no undesirable irrational numbers to compute $\pi$. However, the computational test we performed shows that the series expansion \eqref{eq_24} converges more rapidly by many orders of magnitude than Euler's formula \eqref{eq_22} (see Figures 2 and 3 in \cite{Abrarov2017a}). Therefore, the series expansion \eqref{eq_24} is more advantageous and can be taken to compute the first arctangent function term from the two-term Machin-like formula \eqref{eq_7} for $\pi$.

The second arctangent function term in equation \eqref{eq_7} should not be computed by straightforward substitution of the constant $\beta_2$ into equation \eqref{eq_24}. As mentioned, computing with a ratio of numbers with many digits should be avoided. Instead, the second arctangent function term in equation \eqref{eq_7} can be computed by Newton–Raphson iteration.

\subsection[]{Rational $\beta_2$}

Once the value of the integer $k$ is chosen, it is not difficult to determine the integer ${\beta _1}$ by using equation \eqref{eq_8} with the help of Mathematica. However, determining the second constant $\beta_2$ with \eqref{eq_9} is very hard with increasing $k$, as already mentioned. To overcome this, we proposed a different method \cite{Abrarov2017a}. We define the very simple two-step iteration for $n = \left\{ {2,3,4, \ldots ,k} \right\}$,
$$
\left\{ \begin{aligned}
&{u_n} = u_{n - 1}^2 - v_{n - 1}^2 \hfill \\
&{v_n} = 2{u_{n - 1}}{v_{n - 1}},
\end{aligned}  \right.
$$
where
$$
{u_1} = \frac{{\beta _1^2 - 1}}{{\beta _1^2 + 1}},
$$
and
$$
{v_1} = \frac{{2{\beta _1}}}{{\beta _1^2 + 1}}.
$$
Then
\begin{equation}\label{eq_25}
{\beta _2} = \frac{{2{u_k}}}{{u_k^2 + {{\left( {{v_k} - 1} \right)}^2}}}.
\end{equation}

For $k > 3$, the second arctangent term in \eqref{eq_7} deals only with a rational number ${\beta _2}$. As $k$ increases, the number of digits in the numerator and denominator of the constant ${\beta _2}$ increases. For example, at $k = 6$, equation \eqref{eq_8} yields
$$
{\beta _1} = \left\lfloor {\frac{{\sqrt {2 + \sqrt {2 + \sqrt {2 + \sqrt {2 + \sqrt {2 + \sqrt 2 } } } } } }}{{\sqrt {2 - \sqrt {2 + \sqrt {2 + \sqrt {2 + \sqrt {2 + \sqrt 2 } } } } } }}} \right\rfloor  = 40
$$
and using \eqref{eq_25} we find
$$
\begin{aligned}
\beta _2 &=  - \frac{2634699316100146880926635665506082395762836079845121}{38035138859000075702655846657186322249216830232319} \\
&= - 69.27013796024857670135...\;.
\end{aligned}
$$
Consequently, the two-term Machin-like formula \eqref{eq_7} for $\pi$ is generated as
\small
$$
\begin{aligned}
\frac{\pi }{4} =& 32\arctan{} \left( {\frac{1}{{40}}} \right) \\ 
&- \arctan \left(\frac{38035138859000075702655846657186322249216830232319}{2634699316100146880926635665506082395762836079845121}\right).
\end{aligned}
$$
\normalsize
Lehmer's measure for this two-term Machin-like formula for $\pi$ is $\mu  \approx 1.16751$. However, if we take $k = 27$, then
$$
\beta_1 = \left\lfloor{\frac{{\overbrace {\sqrt {2 + \sqrt {2 + \sqrt {2 +  \ldots  + \sqrt 2 } } } }^{27\,\,{\rm{square}}\,{\rm{roots}}}}}{{\sqrt {2 - \underbrace {\sqrt {2 + \sqrt {2 +  \ldots  + \sqrt 2 } } }_{26\,\,{\rm{square}}\,{\rm{roots}}}} }}} \right\rfloor  = 85445659
$$
and using \eqref{eq_25} we get
$$
\begin{aligned}
\beta_2 &= -\frac{\overbrace{2368557598 \ldots 9903554561}^{522,185,816 \,\, \rm{digits}}}{\underbrace{9732933578 \ldots 4975692799}_{522,185,807 \,\, \rm{digits}}} \\
&=  - 2.43354953523904089818 \ldots  \times 10^8.
\end{aligned}
$$
The corresponding two-term Machin-like formula for $\pi$ is
\small
$$
\frac{\pi}{4} = 67108864\arctan \left(\frac{1}{85445659}\right) - \arctan\left(\frac{\overbrace{ 9732933578 \ldots 4975692799 }^{522,185,807 \,\, \rm{digits}}}{\underbrace{2368557598 \ldots 9903554561}_{522,185,816 \,\, \rm{digits}}}
\right)
$$
\normalsize
for which Lehmer's measure is only $\mu \approx 0.245319$. Such a large number of digits in the numerator and denominator in the second arctangent function may look unusual. However, some formulas for $\pi$ obtained from the Borwein integrals involving the sinc function can also result in ratios of integers with a large number of digits. For example, Bäsel and Baillie reported a formula for $\pi$ that uses a quotient with $453,130,145$ digits in the numerator and $453,237,170$ digits in the denominator \cite{Basel2005}; you can download a file with all the digits of the constant $\beta_2$ from \cite{Abrarov2017c}.

As we can see from these examples, Lehmer's measure decreases with increasing $k$. However, that occurs simultaneously with a rapid increase in the number of digits in the numerator and denominator of the constant $\beta_2$. As a result, taking powers of such fractions becomes very slow. This raises the doubt that Lehmer's measure \eqref{eq_5} is indeed relevant for a given Machin-like formula for $\pi$ when at least one coefficient $\beta_j$ is rational.

To resolve this problem, we considered applying the Newton–Raphson iteration \cite{Abrarov2018b}. Specifically, we showed that each consecutive iteration doubles the number of correct digits in the second term of the arctangent function in equation \eqref{eq_7}. This method is based on the iteration formula:
\begin{equation}\label{eq_26}
{y_{p + 1}} = {y_p} - \left( {1 - {{\sin }^2}\left( {{y_p}} \right)} \right)\left( {\tan \left( {{y_p}} \right) - \frac{1}{{{\beta _2}}}} \right),
\end{equation}
such that
$$
\mathop {\lim }\limits_{n \to \infty } {y_p} = \arctan \left( {\frac{1}{{{\beta _2}}}} \right).
$$
The most important advantage of \eqref{eq_26} is that the rational number $1/{\beta _2}$ is not involved in the computation of the trigonometric functions. As we can see from \eqref{eq_26}, $1/{\beta _2}$ is no longer problematic because it is not taken to a power, nor is it the argument of sine or tangent, which consumes most of the runtime. Instead, it is only applied in a single subtraction. This single subtraction (that can be implemented by changing the precision) takes a negligibly small amount of time as compared to the time to compute $\sin \left( {{y_p}} \right)$ or $\tan \left( {{y_p}} \right)$.

To reduce the number of trigonometric functions from two to one, it is convenient to put equation (26) into the form
\begin{equation}\label{eq_27}
{y_{p + 1}} = {y_p} - \left( {1 - {{\left( {\frac{{2\tan \left( {\frac{{{y_p}}}{2}} \right)}}{{1 + {{\tan }^2}\left( {\frac{{{y_p}}}{2}} \right)}}} \right)}^2}} \right)\left( {\frac{{2\tan \left( {\frac{{{y_p}}}{2}} \right)}}{{1 - {{\tan }^2}\left( {\frac{{{y_p}}}{2}} \right)}} - \frac{1}{{{\beta _2}}}} \right),
\end{equation}
using the elementary trigonometric identities
$$
\sin \left( {{x}} \right) = \frac{{2\tan \left( {{{{x}}}/{2}} \right)}}{{1 + {{\tan }^2}\left( {{{{x}}}/{2}} \right)}}
$$
and
$$
\tan \left( {{x}} \right) = \frac{{2\tan \left( {{{{x}}}/{2}} \right)}}{{1 - {{\tan }^2}\left( {{{{x}}}/{2}} \right)}}.
$$

The tangent function can be found, for example, by using the equation
$$
\tan \left( x \right) = \frac{{\sin \left( x \right)}}{{\cos \left( x \right)}} = \frac{{\sum\limits_{n  = 0}^\infty  {\frac{{{{\left( { - 1} \right)}^n }}}{{\left( {2n  + 1} \right)!}}{{ x }^{2n  + 1}}} }}{{\sum\limits_{n  = 0}^\infty  {\frac{{{{\left( { - 1} \right)}^n }}}{{\left( {2n } \right)!}}{{ x }^{2n }}} }}
$$
representing the ratio based on the Maclaurin series expansions for the sine and cosine functions. Alternatively, we can use the series expansion
$$
\tan \left( x \right) = \sum\limits_{n  = 1}^\infty  {\frac{{{{\left( { - 1} \right)}^{n  - 1}}{2^{2n }}\left( {{2^{2n }} - 1} \right){B_{2n }}}}{{\left( {2n } \right)!}}{{ x }^{2n  - 1}}},
$$
where ${B_{2n }}$ is a Bernoulli number. There are several other ways to compute the tangent, like continued fractions \cite{Havil2012, Trott2011} or Newton–Raphson iteration again \cite{Muller2016}. Perhaps the argument reduction method for the tangent function can also be used to improve accuracy, but we did not implement that, to keep the algorithm as simple as possible.

\subsection{Tangent function}

Equation \eqref{eq_27} is based on the Newton–Raphson iteration method. However, this iteration formula contains the tangent function. We showed \cite{Abrarov2018b} that once the first arctangent term is computed, the number of correct digits in $\pi$ can be doubled at each consecutive step of the iteration as with the Newton–Raphson iteration method. To approximate the tangent function with high accuracy, we can apply the Newton–Raphson iteration again \cite{Muller2016}. The derivation of the iteration-based equation for the tangent function is not difficult. Let $\gamma =\arctan\left(z\right)$.

Using the Newton–Raphson iteration formula
$$
{z_{q+1}} = {z_{q}} - \frac{{f\left( {{z_{q}}} \right)}}{{f'\left( {{z_{q}}} \right)}},
$$
where
$$
f\left( z \right) = \arctan \left( z \right) - \gamma \;\text{and}\; f'\left( z \right) = \frac{d}{{dy}}\left( {\arctan \left( z \right) - \gamma} \right) = \frac{1}{{1 + {z^2}}},
$$
we get
\begin{equation}\label{eq_28}
{z_{q+1}} = {z_{q}} - \left( {1 + z_{q}^2} \right)\left( {\arctan \left( {{z_{q}}} \right) - \gamma} \right)
\end{equation}
such that
$$
\tan \left( \gamma \right) = \mathop {\lim }\limits_{q \to \infty } {z_q}.
$$

Substituting $\gamma = {y_p}/2$ into \eqref{eq_28} yields
\begin{equation}\label{eq_29}
{z_{q+1}} = {z_{q}} - \left( {1 + z_{q}^2} \right)\left( {\arctan \left( {{z_{q}}} \right) - \frac{{{y_p}}}{2}}\right).
\end{equation}
The iteration-based expansion of the series \eqref{eq_24} for the arctangent function converges very rapidly. Therefore, we can apply it to compute the arctangent function in equation \eqref{eq_29}.

\section{Implementation}

\subsection{Linear convergence}

The series expansion \eqref{eq_24} of the arctangent function can be computed as follows.
\small
\begin{verbatim}
atan[M_,x_]:=atan[M,x]=2*Sum[1/(2*m-1)*g[m,x]/(g[m,x]^2+
    h[m,x]^2),{m, M}]

g[1,x_]:=g[1,x]=2/x;
g[m_,x_]:=g[m,x]=g[m-1,x]*(1-4/x^2)+4/x*h[m-1,x]
h[1,_] = 1;
h[m_,x_]:=h[m,x]=h[m-1,x]*(1-4/x^2)-4/x*g[m-1, x]
\end{verbatim}
\normalsize

Next we define the nested radicals consisting of square roots of $2$
\small
\begin{verbatim}
c[0]=0;
c[k_]:=Sqrt[2+c[k-1]]
\end{verbatim}
\normalsize

This computes the constants $\beta_1$ and $\beta_2$ for the two-term Machin-like formula \eqref{eq_7} for $\pi$ at $k=6$.
\small
\begin{verbatim}
\[Beta]1=Floor[c[6]/Sqrt[2-c[6-1]]]
\end{verbatim}
\normalsize
$\qquad\quad 40$ \\
For the constant $\beta_2$, we use the iteration-based formula \eqref{eq_25} instead of equation \eqref{eq_9}.
\small
\begin{verbatim}
u[1]=(\[Beta]1^2-1)/(\[Beta]1^2+1)
\end{verbatim}
\Large
$\qquad \frac{1599}{1601}$

\small
\begin{verbatim}
u[n_]:=u[n-1]^2-v[n-1]^2

v[1]=2*\[Beta]1/(\[Beta]1^2+1)
\end{verbatim}
\Large
$\qquad \frac{80}{1601}$

\small
\begin{verbatim}
v[n_]:=2*u[n-1]*v[n-1]

\[Beta]2=2*u[6]/(u[6]^2+(v[6]-1)^2)
\end{verbatim}
\Large
$\qquad -\frac{2634699316100146880926635665506082395762836079845121}{38035138859000075702655846657186322249216830232319}$
\normalsize
\vspace{0.5cm}

Define a function for the Lehmer's measure corresponding to a two-term Machin-like formula \eqref{eq_7} for $\pi$.
\small
\begin{verbatim}
LehmerMeasure[\[Beta]1_,\[Beta]2_]:=1/Log10[Abs[\[Beta]1]]+
    1/Log10[Abs[\[Beta]2]]
\end{verbatim}
\normalsize
Here is Lehmer's measure for the case $k = 6$.
\small
\begin{verbatim}
LehmerMeasure[\[Beta]1,\[Beta]2]//N
\end{verbatim}
\normalsize
$\qquad\quad 1.16751$
\vspace{0.5cm}

The accuracy improves with each iteration, so we do not need to use the highest accuracy at each step. At $k=6$, the Newton–Raphson iteration-based formula \eqref{eq_29} gives $4$ to $5$ correct digits of the tangent function at each step. Therefore, it is reasonable to use the argument $5q+2$ in {\ttfamily{SetPrecision}}, where $2$ is taken to minimize rounding or truncation errors.

Recall that $\lim\limits_{p\to\infty}y_p=\arctan\left(1/\beta_2\right)$. As an initial guess for the Newton–Raphson iteration, we choose $\arctan\left(1/\beta_2\right)$, since this is close to the actual value of the tangent function $\tan\left(y_p/2\right)$.

This sets up the recurrence for $z_q$ as defined in \eqref{eq_29}.
\small
\begin{verbatim}
z[1,_]=SetPrecision[ArcTan[1/\[Beta]2],5]
\end{verbatim}
\normalsize
$\qquad -0.014435$

\small
\begin{verbatim}
z[q_,x_]:=z[q,x]=SetPrecision[z[q-1,x]-(1+z[q-1,x]^2)*
    (atan[q,z[q-1,x]]-x),5*q+2]
\end{verbatim}
\normalsize
This part of the program, which computes the tangent function, takes most of the runtime. It uses the Newton–Raphson iteration built on the basis of the series expansion \eqref{eq_24} of the arctangent function (see \eqref{eq_29}).

This sets up the Newton–Raphson iteration formula \eqref{eq_27}.
\small
\begin{verbatim}
y[1]=1/\[Beta]2
\end{verbatim}
\Large
$\qquad -\frac{38 035 138 859 000 075 702 655 846 657 186 322 249 216 830 232 319}
{2 634 699 316 100 146 880 926 635 665 506 082 395 762 836 079 845 121}$
\vspace{0.5cm}

\small
\begin{verbatim}
y[p_]:=y[p]=With[{a=2*z[p,1/2*y[p-1]],b=z[p,1/2*y[p-1]]^2},
    SetPrecision[y[p-1],5*p+2]-(1-(a/(1+b))^2)*
		    (a/(1-b)-1/\[Beta]2)]
\end{verbatim}
\normalsize
The next part of the program invokes the value $\tan\left(y_p/2\right)$ and performs just a few arithmetic operations. As mentioned, the numerator and denominator of $1/\beta_2$ contain many digits, but they are not involved in computing the tangent function. Only a minor part of the time is needed to subtract this number (see equation \eqref{eq_27}). Consequently, Lehmer's measure applies unconditionally.

The table shows how the values of the arctangent function gain digits at each iteration step.
\small
\begin{verbatim}
Table[{p-1,y[p]},{p,2,11}]//TableForm
\end{verbatim}
\footnotesize
$$
\begin{tabular}{ll}
 1  & -0.0144354827911 \\
 2  & -0.014435232407997704 \\
 3  & -0.01443523240799679443925 \\
 4  & -0.0144352324079967944392951115 \\
 5  & -0.014435232407996794439295110969614 \\
 6  & -0.01443523240799679443929511096961893161 \\
 7  & -0.0144352324079967944392951109696189315443963 \\
 8  & -0.014435232407996794439295110969618931544397010224 \\
 9  & -0.01443523240799679443929511096961893154439701021536520 \\
 10 & -0.0144352324079967944392951109696189315443970102153652922241
\end{tabular}
$$
\normalsize

This shows the linear rate of convergence to $\pi$ in terms of the number of accurate digits.
\small
\begin{verbatim}
Table[Abs[MantissaExponent[N[Pi-4*(2^(6-1)*atan[1000,
    1/\[Beta]1]+y[p]),75]][[2]]],{p,2,11}]
\end{verbatim}
\normalsize
$\qquad \text{\{5, 12, 21, 26, 31, 36, 41, 46, 51, 56\}}$
\vspace{0.5cm}
\small
\begin{verbatim}
ListLinePlot@%
\end{verbatim}
\normalsize

\begin{figure}[H]
\begin{center}
\includegraphics[width=20pc]{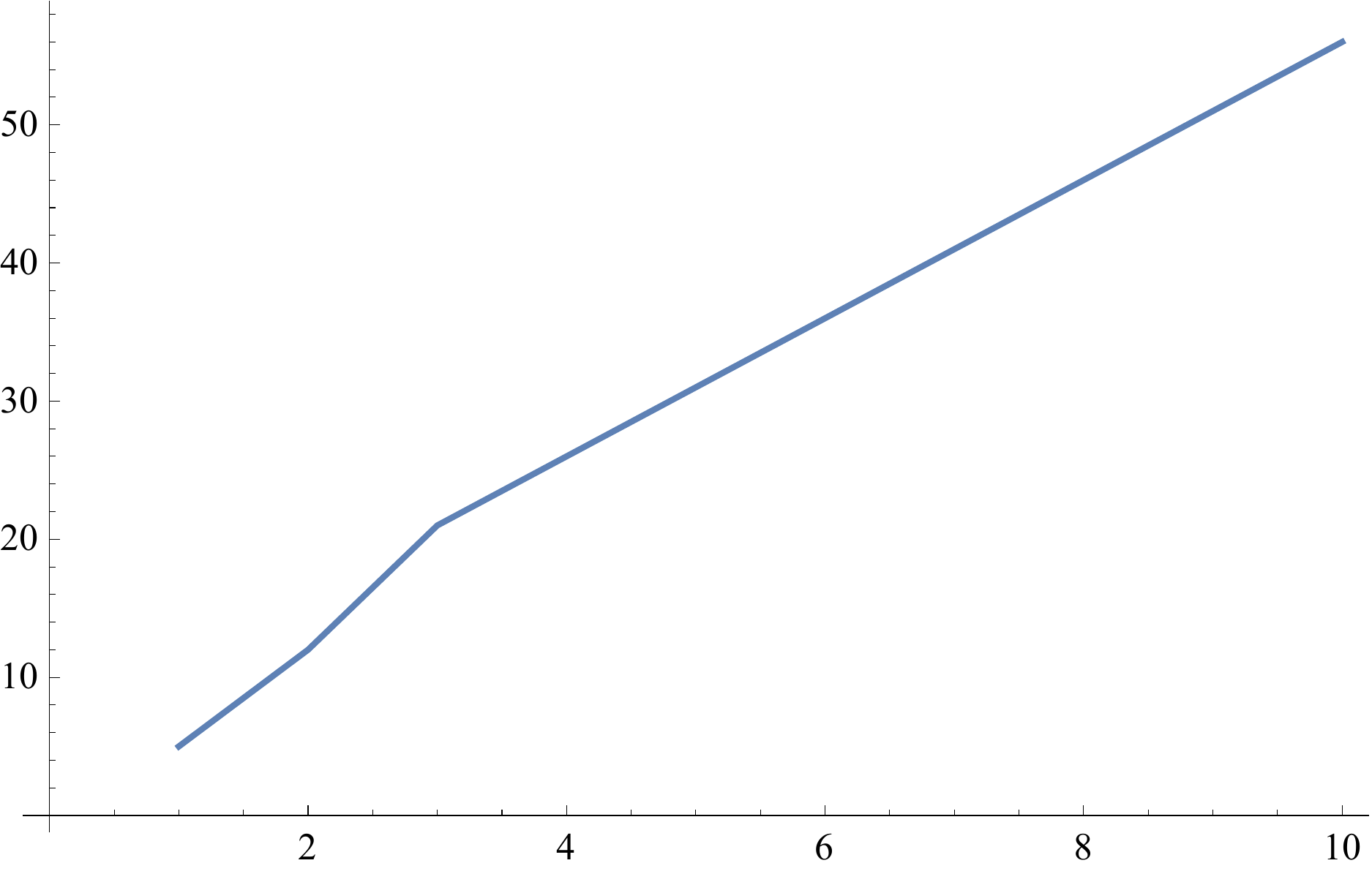}\hspace{2pc}%
\begin{minipage}[b]{22pc}
\vspace{0.3cm}
\end{minipage}
\end{center}
\end{figure}
\noindent After the first two iterations, each of the following iterations adds five correct digits to the approximation of $\pi$.
\small
\begin{verbatim}
Differences@%%
\end{verbatim}
\normalsize
$\qquad \text{\{7, 9, 5, 5, 5, 5, 5, 5, 5\}}$
\vspace{0.5cm}

The convergence rate increases as Lehmer's measure decreases, which can be readily confirmed by increasing $k$ and readjusting the {\ttfamily SetPrecision} parameter in this algorithm. No undesirable irrational numbers are needed. Furthermore, since Lehmer's measure can be made vanishingly small, there is no upper bound in the convergence rate per iteration.

\subsection{Quadratic convergence}

Consider a variation of the algorithm based on the Newton–Raphson iterations for the tangent function that can be implemented to get quadratic convergence to $\pi$. Assume that only $50$ decimal digits of $\pi$ are known at the beginning.
\small
\begin{verbatim}
d = 50;

Y=Floor[SetPrecision[Pi/4-32*atan[Ceiling[2*d/4],1/40],
    2*d]*10^d]/10^d
\end{verbatim}
\Large
\vspace{0.5cm}
$\qquad -\frac{1 443 523 240 799 679 443 929 511 096 961 893 154 439 701 021 537}
{100 000 000 000 000 000 000 000 000 000 000 000 000 000 000 000 000}$
\normalsize
\vspace{0.5cm}

We determined experimentally that at $k=6$, equation \eqref{eq_24} provides four or five correct digits of $\pi$ at each successive step. Therefore, it is sufficient to take $d/4$ terms. However, to exclude truncation and rounding errors, we use $d/2$ terms of the arctangent function.

We define {\ttfamily acc[q]} to determine the accuracy of computation and for the number of summation terms in approximation \eqref{eq_24} of the arctangent function.
\small
\begin{verbatim}
acc[q_]:=If[4*2^q<2*d,4*2^q,2*d+10]
\end{verbatim}
\normalsize

The multiplier $4\times2^2$ was found experimentally. When $4\times2^2>2d$, we restrict its rapid growth to $2d+10$, as we do not need extra accuracy at this stage. We add $10$ to exclude truncation and rounding errors. We have taken the initial value $z = -0.007$.

We use equation \eqref{eq_29}.
\small
\begin{verbatim}
Z[1]:=-0.007;
Z[q_]:=Z[q]=SetPrecision[Z[q-1]-(1+Z[q-1]^2)*
    (atan[Ceiling[acc[q-1]/4],Z[q-1]]-Y/2),acc[q]]
\end{verbatim}
\normalsize

For our choice of $d$, here is the required index.
\small
\begin{verbatim}
q=1;While[acc[q]<2*d,q++];q++;q
\end{verbatim}
\normalsize
$\qquad 6$
\vspace{0.5cm}

Here are the first six values of {\ttfamily Z}.
\small
\begin{verbatim}
Table[Z[q],{q,7}]//Column
\end{verbatim}
\small
\[
\begin{split}
-&0.007 \\
-&0.007217741202976670 \\
-&0.0072177415380702764955539959679745 \\
-&0.0072177415380702764963413557275658146208551067226428615\ddots \\
 &42089449654 \\
-&0.0072177415380702764963413557275658146208547604442639281\ddots \\
 &57473637855747694358873239684119495265539483149682234585\ddots \\
 &0 \\
\end{split}
\]
\[
\begin{split}
-&0.0072177415380702764963413557275658146208547604442639281\ddots \\
 &57473637855747694358917310352398756159236332112742091353\ddots \\
 &7 \\
-&0.0072177415380702764963413557275658146208547604442639281\ddots \\
 &57473637855747694358917310352398756159236332112742091353\ddots \\
 &7
\end{split}
\]
\normalsize

Once the tangent function is computed, we substitute it into equation \eqref{eq_27}. The value {\ttfamily Y2} has double the accuracy of {\ttfamily Y} with only one iteration.
\small
\begin{verbatim}
Y2=Y-(1-(2*Z[q]/(1+Z[q]^2))^2)*(2*Z[q]/(1-Z[q]^2)-1/\[Beta]2)
\end{verbatim}
\normalsize
\[
\begin{split}
-&0.01443523240799679443929511096961893154439701021536529222\ddots \\
 &2928740206431337484883800802495521020607068736057611275
\end{split}
\]

The approximate values of the arctangent function by iteration and by the built-in function {\ttfamily ArcTan} match to $100$ places.
\small
\begin{verbatim}
N[Y2,2*d]
N[ArcTan[1/\[Beta]2],2*d]
\end{verbatim}
\normalsize
\[
\begin{split}
-&0.01443523240799679443929511096961893154439701021536529222\ddots \\
 &292874020643133748488380080249552102060706874 \\
-&0.01443523240799679443929511096961893154439701021536529222\ddots \\
 &292874020643133748488380080249552102060706874
\end{split}
\]

Since we got {\ttfamily Y2} with double the accuracy, it can now be used to compute $\pi$ with significantly improved accuracy.
\small
\begin{verbatim}
pi2=4*(32*atan[Ceiling[4*d/4],1/40]+Y2)
\end{verbatim}
\normalsize
\[
\begin{split}
&3.141592653589793238462643383279502884197169399375105820974\ddots \\
&944592307816406286208998628034825342117067980868275870
\end{split}
\]
This shows that the number {\ttfamily d2} of correct decimal digits of {\ttfamily pi2} doubled from $50$ to $101$.
\small
\begin{verbatim}
d2=Abs[MantissaExponent[N[Pi-pi2,10000]][[2]]]
\end{verbatim}
\normalsize
$\qquad 101$
\vspace{0.5cm}

\noindent More directly, this shows the complete match between the computed approximation of $\pi$ and that provided by Mathematica.
\small
\begin{verbatim}
SetPrecision[pi2,d2]
\end{verbatim}
\normalsize
\[
\begin{split}
&3.141592653589793238462643383279502884197169399375105820974\ddots \\
&9445923078164062862089986280348253421170680
\end{split}
\]
\small
\begin{verbatim}
N[Pi,d2]
\end{verbatim}
\normalsize
\[
\begin{split}
&3.141592653589793238462643383279502884197169399375105820974\ddots \\
&9445923078164062862089986280348253421170680
\end{split}
\]

The first arctangent function in the two-term Machin-like formula \eqref{eq_7} for $\pi$ can also be found by using the same algorithm based on the Newton–Raphson iteration. Consequently, this method results in quadratic convergence to $\pi$. However, unlike the Brent–Salamin algorithm (also known as the Gauss–Brent–Salamin algorithm) with quadratic convergence to $\pi$ \cite{Berggren2004}, our approach does not involve any irrational numbers. The number of summation terms in equation \eqref{eq_24} and the number of iteration cycles for computation of the tangent function \eqref{eq_29} decrease with increasing $k$. This can be confirmed by using the code given here. To the best of our knowledge, this is the first algorithm showing the feasibility of quadratic convergence to $\pi$ without using any irrational numbers.

\section{Conclusion}

In this article, we presented a new algorithm to compute the two-term Machin-like formula \eqref{eq_7} for $\pi$ and showed an example where the condition $\beta_j\in \mathbb{Z}$ was not necessary in order to validate Lehmer's measure \eqref{eq_5}. Since this algorithmic implementation lets us avoid subsequent exponentiation of the second constant $\beta_2$, this approach may be promising for computing $\pi$ more rapidly without using irrational numbers.

\section*{Acknowledgments}

This work is supported by National Research Council Canada, Thoth Technology, Inc., York University, Epic College of Technology and Epic Climate Green (ECG) Inc. The authors wish to thank the reviewer for useful comments and recommendations. Constructive suggestions from the editor that improved the content of this work are greatly appreciated.

\bigskip

\end{document}